\newcommand{\CC}{\mathbf{C}}
\newcommand{\QQ}{\mathbf{Q}}
\newcommand{\JJ}{\mathcal{J}}
\newcommand{\OO}{\mathcal{O}}
\newcommand{\de}{\partial}
\newcommand{\mfa}{\mathfrak{a}}
\newcommand{\mfb}{\mathfrak{b}}
\newcommand{\mfm}{\mathfrak{m}}
\newcommand{\ep}{\epsilon}
\newcommand{\qa}{\quad}
\newcommand{\vp}{\varphi}
\newcommand{\Ld}{\Lambda}
\newcommand{\noi}{\noindent}
\providecommand{\abs}[1]{\left|#1\right|}
\theoremstyle{plain}
\newtheorem{theorem}{Theorem}[section]
\newtheorem{thm}[theorem]{Theorem}
\newtheorem{corollary}[theorem]{Corollary}
\newtheorem{proposition}[theorem]{Proposition}
\newtheorem{definition}[theorem]{Definition}
\newtheorem{question}[theorem]{Question}
 \newtheorem{example}[theorem]{\textnormal{\textbf{Example}}}
\theoremstyle{remark}
\DeclareMathOperator{\lct}{lct}
\begin{document}

\title{On a question of Teissier} 

\keywords{Log canonical thresholds, minimal exponent, Jacobian ideal, plurisubharmonic functions}

\subjclass[2010]{}

\author{Dano Kim}

\date{}

\maketitle

\begin{abstract}  

Using a  result of Demailly and Pham on log canonical thresholds, we give  an upper bound for polar invariants from a question of Teissier on hypersurface singularities. This provides a weaker alternative upper bound compared to the one conjectured by Teissier. 

\end{abstract}

\section{Introduction} 

 Let $f(z_1, \ldots, z_n)$ be a holomorphic function defined near $0 \in \CC^n$ such that the hypersurface $f= 0$ has an isolated singularity at $0$. In the context of a series of extensive research on such singularities (cf. \cite{T73}, \cite{T77}, \cite{T80}), Teissier considered the \emph{polar invariant} $\theta(f)$ which measures the rate of vanishing of the Jacobian ideal $J_f$ of $f$ with respect to that of the maximal ideal $\mfm$ of $0 \in \CC^n$. Moreover, he considered the (well-defined) polar invariants $\theta(f_1), \ldots, \theta(f_{n-1})$ where $f_j$ denotes the restriction of $f$ to general $j$-codimensional planes containing $0 \in \CC^n$. In this paper, we give the following upper bound for the particular combination of these polar invariants from one of the questions in \cite{T80}  (cf. \cite{T12}, \cite{Li83}, \cite{L84}).

 \begin{thm}\label{main}
 
  Let $\lct(\mfm \cdot J_f)$ be the log canonical threshold at $0 \in \CC^n$ of the product ideal $\mfm \cdot J_f$. We have 
 
 \begin{equation}\label{main1}
 \frac{1}{1+ \theta (f)} + \frac{1}{1+ \theta (f_1)} + \ldots + \frac{1}{1+ \theta (f_{n-1})} \le  \lct (\mfm \cdot J_f).   
 \end{equation}
 
 \end{thm}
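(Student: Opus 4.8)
The plan is to recast the left-hand side of \eqref{main1} as an instance of the Demailly--Pham lower bound for a log canonical threshold, applied to a single plurisubharmonic germ on $\CC^n$, and then to recognize the invariants appearing in that bound as Teissier's polar invariants.

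First I would replace the two ideals by one potential. Writing $\nabla f = (\partial f/\partial z_1, \ldots, \partial f/\partial z_n)$, set $\varphi := \log\big(|z|\cdot|\nabla f|\big)$, a psh germ at $0\in\CC^n$. Since $\sum_{i,j}|z_i\,\partial_j f|^2 = \big(\sum_i|z_i|^2\big)\big(\sum_j|\partial_j f|^2\big)$, the function $\varphi$ is a potential of the ideal $\mfm\cdot J_f$ and $\lct(\mfm\cdot J_f) = \lct_0(\varphi)$. Because $f$ has an isolated singularity, $\mfm\cdot J_f$ is $\mfm$-primary, so $e^{-2\lambda\varphi}$ is non-integrable only at the origin; this is precisely the situation in which the result of Demailly and Pham produces a clean lower bound for $\lct_0(\varphi)$. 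That bound furnishes a nested generic flag $L_1\subset L_2\subset\cdots\subset L_n=\CC^n$ of linear subspaces through $0$ (with $\dim L_k=k$) and invariants $\mathbf e_k=\mathbf e_k(\varphi)$ attached to the germs $\varphi|_{L_k}$, such that $\lct_0(\varphi)\ge\sum_{k=1}^n 1/\mathbf e_k$. For a potential with analytic singularities, $\varphi|_{L_k}$ is the potential of the restricted ideal $(\mfm\cdot J_f)\cdot\OO_{L_k} = \mfm_{L_k}\cdot\big(J_f\cdot\OO_{L_k}\big)$, and $\mathbf e_k$ is a {\L}ojasiewicz-type exponent of that restricted ideal relative to $\mfm_{L_k}$. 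Such exponents are additive over products and equal $1$ on a maximal ideal, so $\mathbf e_k = 1+\ell_k$, where $\ell_k$ is the {\L}ojasiewicz exponent of $J_f\cdot\OO_{L_k}$ relative to $\mfm_{L_k}$.

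The crux is then to prove $\ell_k = \theta(f_{n-k})$ for each $k$, so that $\mathbf e_k = 1+\theta(f_{n-k})$ and, reindexing by $j=n-k$, $\lct_0(\varphi)\ge\sum_{k=1}^n\frac1{1+\theta(f_{n-k})}=\sum_{j=0}^{n-1}\frac1{1+\theta(f_j)}$, which is exactly \eqref{main1}. For generic $L_k$ the section $f|_{L_k}$ is $f_{n-k}$, still an isolated singularity, whose $\mu^*$-sequence agrees with that of $f$ up to index $k$. Two inputs from Teissier's theory of polar invariants would then close the argument: (a) for generic $L_k$, the restricted ideal $J_f\cdot\OO_{L_k}$ and the Jacobian ideal $J_{f|_{L_k}}$ of the section have the same integral closure --- hence the same {\L}ojasiewicz exponent relative to $\mfm_{L_k}$ --- the content being that the ``normal'' partial derivatives $\partial_\nu f|_{L_k}$ lie in $\overline{J_{f|_{L_k}}}$ for a generic plane; and (b) for any isolated hypersurface singularity $g$ the polar invariant equals the {\L}ojasiewicz exponent of its gradient, $\theta(g)=\mathcal L_0(\nabla g)$ (equivalently, $\theta(g)$ is the ratio of the last two Milnor numbers of the $\mu^*$-sequence of $g$). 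Applying (a)--(b) with $g = f|_{L_k}$ gives $\ell_k = \mathcal L_0(\nabla(f|_{L_k})) = \theta(f|_{L_k}) = \theta(f_{n-k})$, as needed.

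I expect the main obstacle to be the genericity bookkeeping behind step (a) together with making the two ``generic'' notions consistent: the flag $L_1\subset\cdots\subset L_n$ must be chosen generic enough to (i) compute the Demailly--Pham invariants $\mathbf e_k(\varphi)$, (ii) keep each section $f|_{L_k}$ an isolated singularity with the expected $\mu^*$-sequence, and (iii) force $\overline{J_f\cdot\OO_{L_k}}=\overline{J_{f|_{L_k}}}$; and simultaneously the ``general linear form'' used in Teissier's definition of $\theta(\cdot)$ on each $L_k$ must remain general for that $f|_{L_k}$. A secondary but essential technical point is to confirm that for an analytic-singularity potential the Demailly--Pham invariants really are the {\L}ojasiewicz exponents of the successive generic linear sections --- and not some coarser invariants (Lelong numbers, mixed multiplicities) that would be too small to yield \eqref{main1} --- so that the reduction above is valid.
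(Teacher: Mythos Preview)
Your strategy is exactly the paper's: take $\varphi=\log|\mfm\cdot J_f|$, apply Demailly--Pham, and then identify the resulting invariants with $1+\theta(f_j)$ via Teissier's two inputs (a) $\overline{J_f\cdot\OO_{L_k}}=\overline{J_{f|_{L_k}}}$ for generic sections and (b) $\theta(g)=L(\log|J_g|)$.

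The one point where your write-up is imprecise is your description of what Demailly--Pham actually outputs, and this is precisely the ``secondary technical point'' you flag at the end. The theorem of \cite{DH} does \emph{not} hand you a flag together with \L ojasiewicz exponents; it gives
\[
\lct(\varphi)\ \ge\ \sum_{k=1}^{n}\frac{e_{k-1}(\varphi)}{e_{k}(\varphi)},
\]
where the $e_k(\varphi)$ are the higher Lelong numbers (mixed Monge--Amp\`ere masses) of $\varphi$ on $\CC^n$. The paper then supplies the missing link (its Proposition~\ref{eel}): using Demailly's comparison theorem one gets $e_{n-1}(\varphi)/e_n(\varphi)\ge 1/L(\varphi)$, and Siu's slicing theorem transports this to generic $\Lambda_j$, yielding $e_{n-j-1}(\varphi)/e_{n-j}(\varphi)\ge 1/L(\varphi|_{\Lambda_j})$. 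So the passage from the Demailly--Pham invariants to \L ojasiewicz exponents is an \emph{inequality} in the favorable direction, not an identification; in particular your worry that mixed-multiplicity invariants might be ``too small'' is inverted --- the ratios $e_{k-1}/e_k$ dominate $1/L(\varphi|_{\Lambda_{n-k}})$, which is all you need. After that, your steps (a) and (b) are exactly the content of the paper's Corollary~\ref{polar}, and the genericity bookkeeping you anticipate is handled by the ``almost all'' sense of \cite{S74} used there.
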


In fact, in his question~\cite[p.7]{T80}, Teissier conjectured that one can put the \emph{Arnold exponent} $\sigma(f)$ of $f$ at $0$, in the place of $ \lct (\mfm \cdot J_f)$ in \eqref{main1}. The Arnold exponent $\sigma(f)$ is an invariant which is defined from certain natural asymptotic expansions of integrals (cf. \cite{M74}, \cite{AGV84}).     It is related to log canonical thresholds by $\lct (f) = \min \{ \sigma(f), 1 \}$, cf. \cite[\S 9]{Ko97}.  It is also known as (i.e. equal to) the \emph{minimal exponent} which is defined in terms of Bernstein-Sato polynomials, the equality being due to \cite{M75}, \cite{M74}.

  Since $f$ belongs to the integral closure of $\mfm \cdot J_f$ in the local ring at $0$ by \cite{BS74} (cf. \cite[Cor. 11.19]{D11}), we have 
\begin{equation}\label{lctm}
\lct (\mfm \cdot J_f)  \ge \lct (f) 
\end{equation} (note that strict inequality holds here for many examples, e.g. $f(x,y) = y^2 + x^3$). In view of these, Theorem~\ref{main} provides an upper bound, weaker than $\sigma(f)$, for the LHS of \eqref{main1} when the hypersurface $(f=0)$ fails to have rational singularities (which is equivalent to $\sigma(f) \ge 1$ by \cite{S93}). When $(f=0)$ does have rational singularities, it will be interesting to further compare $\lct (\mfm \cdot J_f)$ and $\sigma(f)$. We also remark that, at least in a certain sense, $\lct (\mfm \cdot J_f)$ can be regarded as `not too distant' from the singularity of $f$ itself, in view of  \cite[Prop. 3.8]{ELSV} which says that $J_f$ is included in the multiplier ideal $\JJ((f)^{1-\ep})$ for all $\ep > 0$.

 On the other hand, very recently the conjectured bound was shown by B. Dirks and M. Musta\c{t}\v{a} \cite{DM20}, building on the approaches of \cite{EM20}, \cite{L84} which, in particular, reduces the problem to the `one codimension' version. \footnote{This paper was written after we got interested from \cite{EM20} and  before the appearance of \cite{DM20}.}  The method of \cite{DM20} uses, among other things, Saito's theory of mixed Hodge modules and the theory of Hodge ideals (cf. \cite{MP19}, \cite{MP20}). In contrast, our method for the weaker result is completely different and does not use these theories. \footnote{But our method does not deal with the Arnold exponent directly. It will be interesting if the method of this paper can be further combined with the complex analytic aspect of the Arnold exponent = the minimal exponent.}    It uses singularities of plurisubharmonic  functions and  especially the following consequence of a fundamental result of Demailly and Pham~\cite{DH}.

\begin{theorem}

 Let $\vp$ be a psh function defined near $0 \in \CC^n$. Suppose that there exists $C \ge 0$ such that   $ \vp \ge C \log \abs{z} + O(1)$ as in \eqref{bb}. 
 Let $\lct$ be the log-canonical threshold at $0$ and $e_k$ be the $k$-th Lelong number at $0$. Let $L(\vp)$ be the  Łojasiewicz exponent of $\vp$ at $0$ (see Definition~\ref{loja}). For each $j= 1, \ldots, n-1$, let $\Ld_{j} = H_1 \cap \ldots \cap H_{j}$ where $H_1, \ldots, H_j$ are general hyperplanes containing $0 \in \CC^n$. 
Then we have

\begin{align*}
 \lct (\vp) &\ge  \frac{e_{n-1} (\vp)}{e_n (\vp)} +  \frac{e_{n-2} (\vp)}{e_{n-1} (\vp)} + \ldots +  \frac{1}{e_1 (\vp)} \\
  &\ge  \frac{1}{ L( \vp ) } + \frac{1}{ L( \vp|_{\Ld_1} ) }  + \ldots +  \frac{1}{ L( \vp|_{\Ld_{n-1}} ) }.
  \end{align*}

\end{theorem}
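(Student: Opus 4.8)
The two inequalities are proved by entirely different means. The first one,
\[
\lct(\vp) \;\ge\; \frac{e_{n-1}(\vp)}{e_n(\vp)} + \frac{e_{n-2}(\vp)}{e_{n-1}(\vp)} + \cdots + \frac{1}{e_1(\vp)},
\]
is a direct consequence of the sharp lower bound of Demailly and Pham \cite{DH} for the complex singularity exponent at $0$, phrased through the generalized Lelong numbers $e_k(\vp)$ (here $e_0 = 1$, and $e_k(\vp)$ is the Monge--Amp\`ere mass at $0$ of the restriction of $\vp$ to a general $k$-dimensional linear subspace through $0$, equivalently $\int_{\{0\}}(dd^c\vp)^k\wedge(dd^c\log\abs{z})^{n-k}$). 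The only role of the assumption $\vp \ge C\log\abs{z}+O(1)$ is to make this statement meaningful: restricting the bound to a general linear subspace $S$ through $0$ gives $\vp|_S \ge C\log\abs{z}|_S + O(1)$, and the comparison argument used below then yields $e_k(\vp)\le C^k<\infty$, so that the ratios on the right are well defined. So for the first inequality there is nothing to add to \cite{DH}.

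For the second inequality it suffices to prove, for each $j = 0,1,\dots,n-1$, the single estimate
\[
L\big(\vp|_{\Ld_j}\big) \;\ge\; \frac{e_{n-j}(\vp)}{e_{n-j-1}(\vp)},\qquad \Ld_0 := \CC^n,
\]
since passing to reciprocals and summing over $j$ (after the substitution $k = n-j$ in the middle expression) gives exactly $\tfrac{e_{n-1}}{e_n} + \cdots + \tfrac{1}{e_1}\ge \tfrac{1}{L(\vp)} + \cdots + \tfrac{1}{L(\vp|_{\Ld_{n-1}})}$. Fix $j$, put $m := n-j$, and let $\psi := \vp|_{\Ld_j}$, a psh germ on $\Ld_j\cong\CC^m$ which still satisfies $\psi \ge C'\log\abs{z}+O(1)$, so that its unbounded locus is analytically $\{0\}$. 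Because $\Ld_j$ is an intersection of general hyperplanes, the restriction theorem for generalized Lelong numbers (Demailly; cf.\ \cite{D11}, \cite{DH}), applied to $\psi$ on $\CC^m$ and once more to a further general subspace of $\Ld_j$, identifies $e_m(\psi) = e_{n-j}(\vp)$ and $e_{m-1}(\psi) = e_{n-j-1}(\vp)$. Thus everything reduces to the local inequality $L(\psi) \ge e_m(\psi)/e_{m-1}(\psi)$ on $\CC^m$.

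To prove this, fix $\delta > 0$. By definition of the {\L}ojasiewicz exponent there is a constant with $\psi \ge (L(\psi)+\delta)\log\abs{z} + O(1)$ on a punctured ball. Since $\psi$ is locally bounded off $\{0\}$, the current $T := (dd^c\psi)^{m-1}$ is a well-defined closed positive current of bidimension $1$, and $\nu(T,\psi) = \int_{\{0\}}(dd^c\psi)^m = e_m(\psi)$, $\nu(T,\log\abs{z}) = \int_{\{0\}}(dd^c\psi)^{m-1}\wedge dd^c\log\abs{z} = e_{m-1}(\psi)$, where $\nu(T,\cdot)$ is Demailly's generalized Lelong number. Applying Demailly's comparison theorem for generalized Lelong numbers (\cite{D11}) with the two weights $(L(\psi)+\delta)\log\abs{z} \le \psi + O(1)$ — the more singular weight carrying the larger generalized Lelong number — we get
\[
e_m(\psi) = \nu(T,\psi) \;\le\; \nu\big(T,(L(\psi)+\delta)\log\abs{z}\big) \;=\; (L(\psi)+\delta)\,\nu(T,\log\abs{z}) \;=\; (L(\psi)+\delta)\,e_{m-1}(\psi),
\]
and letting $\delta\to 0$ gives $e_m(\psi)\le L(\psi)\,e_{m-1}(\psi)$, as required.

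The delicate point is precisely the step just carried out: the currents $(dd^c\psi)^{m-1}$, the generalized Lelong numbers, and the comparison theorem must all be available for an arbitrary psh germ rather than one with analytic singularities, which is where one leans on Demailly's monotone regularization of psh functions and his theory of generalized Lelong numbers — and the hypothesis $\vp \ge C\log\abs{z}+O(1)$ is exactly what keeps the relevant masses finite and the pole set analytically trivial, so that all of this is legitimate. A more routine point still worth checking is that the various ``general'' choices — the hyperplanes cutting out $\Ld_j$ and the auxiliary general subspace of $\Ld_j$ — can be arranged to be simultaneously general, so that the identities $e_m(\psi) = e_{n-j}(\vp)$ and $e_{m-1}(\psi) = e_{n-j-1}(\vp)$ hold for one and the same $\Ld_j$.
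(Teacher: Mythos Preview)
Your proof is correct and follows essentially the same route as the paper: the first inequality is quoted from \cite{DH}, and the second is obtained term by term from Demailly's comparison theorem applied to $T=(dd^c\psi)^{m-1}$ with the weights $\psi$ and $\log|z|$, after identifying the higher Lelong numbers on $\CC^n$ with those of the restriction $\psi=\vp|_{\Ld_j}$ on $\Ld_j$. The only substantive difference is in how that identification is justified: the paper invokes Siu's slicing theory \cite{S74} for closed positive currents (applied to $(dd^c\vp)^{n-j}$ and $(dd^c\vp)^{n-j-1}\wedge dd^c\log|z|$), whereas you appeal to a ``restriction theorem for generalized Lelong numbers'' with the looser citations \cite{D11}, \cite{DH}. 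The precise reference you want here is \cite{S74}; the slicing statement you need is not proved in \cite{DH}. Your remarks on finiteness of the $e_k(\vp)$ under the hypothesis \eqref{bb} and on the simultaneous genericity of the $\Ld_j$ are well taken and line up with the paper's use of ``almost all'' from \cite{S74}.
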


 \noi Here the first inequality is from  \cite{DH} and the second due to Proposition~\ref{eel}. Our arguments for Theorem~\ref{main} use only the `algebraic' case of  plurisubharmonic functions, i.e. those arising from ideals : thus it may be possible that they can be translated in the algebraic language (cf. \cite{BF16}, \cite{B17} for some closely related developments in this regard). We remark that the above main result of \cite{DH} is proved for general psh functions (with isolated singularities) by reducing to the algebraic case (which was further elaborated  in \cite{B17}).   
 It is also interesting that our method for Theorem~\ref{main} does not use the `one codimension' version of the main result of \cite{DH}.  In fact, such a statement is an open question :

\begin{question}[Hoang Hiep Pham~\cite{H19}]\label{Pham}
Let $\vp$ be a psh function with isolated singularities at $0 \in \CC^n$. Do we have 

$$ \lct(\vp) \ge \lct_{n-1} (\vp) + \frac{e_{n-1} (\vp)}{e_n (\vp)} $$

\noi where $\lct_{n-1} (\vp)$ denotes the supremum of $\lct (\vp|_H)$ when $H$ ranges over hyperplanes in $\CC^n$ containing $0 \in \CC^n$ ? 

\end{question}

 \noi We  hope that the interface of analytic and algebraic ideas as in this paper  will help shed light on Question~\ref{Pham}.  
 \\

\noi \textbf{Acknowledgements.}
The author is grateful to Mircea  Musta\c{t}\v{a} and Alexander Rashkovskii for  helpful comments. 
This research was supported by Basic Science Research Program through NRF Korea funded by the Ministry of Education (2018R1D1A1B07049683).
\\

\section{Some preliminaries and the proof} 

\subsection{Plurisubharmonic singularities} 

 We refer to \cite{D11} (and its predecessors) for the definition and basic properties of plurisubharmonic (i.e. psh) functions. 
 Let $u$ be a psh function defined near $0 \in \CC^n$. We will say that $u$ is singular at $0$ if $u(0) = -\infty$. 

Following \cite{D11},  we will say that two psh functions $u$ and $v$ have equivalent singularities if their difference is locally bounded, i.e. if $u = v + O(1)$ where $O(1)$ refers to a function which is locally bounded near every point.   Also we will say that $u$ is less singular (resp. more singular) than $v$ when $u \ge v + O(1)$ (resp. $u \le v + O(1)$).

 In this paper, we will be mostly concerned with psh singularities arising from ideals. When $\mfa$ is an ideal (or an ideal sheaf) locally generated by holomorphic functions $f_1, \ldots, f_m$, we will use the notation 
 
 \begin{equation}\label{ideal}
 u = c \log \abs{\mfa} 
 \end{equation}
 \noi to refer to the psh function (or more precisely the equivalence class of psh functions under the above relation $u = v + O(1)$) defined by $u = c \log (\sum^m_{j =1} \abs{f_j}^{}) $.  It is well known from \cite{D11} that the singularity equivalence class of $u$ is well-defined by the ideal (or the ideal sheaf), independent of choices of generators $f_1, \ldots, f_m$ (see also \cite[Prop. 3.1]{K14} for some basic exposition). The following is convenient when dealing with integral closure of ideals. 

\begin{proposition}\label{closure}

 Suppose that $\mfa$ and $\mfb$ are two coherent ideal sheaves whose integral closures coincide. Then we have $\log \abs{\mfa} = \log \abs{\mfb} + O(1)$ for the psh functions defined above as in \eqref{ideal} (for $c=1$).

\end{proposition}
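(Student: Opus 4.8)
The plan is to exploit the well-known analytic characterization of integral closure of ideals in terms of boundedness of quotients, together with the valuative criterion. Recall that for a coherent ideal sheaf $\mfa$ with local generators $f_1, \ldots, f_m$ near $0$, a holomorphic function $g$ lies in the integral closure $\overline{\mfa}$ (in a neighborhood of $0$) if and only if there is a neighborhood $U$ of $0$ and a constant $C > 0$ such that $\abs{g} \le C \sum_{j=1}^m \abs{f_j}$ on $U$ (this is the content of, e.g., \cite[Cor. 11.9]{D11} or the classical results of Hironaka and Lejeune–Teissier). The strategy is to apply this characterization to each generator of $\mfb$ against the generators of $\mfa$, and vice versa.

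The key steps, in order, are as follows. First I would fix local generators $f_1, \ldots, f_m$ of $\mfa$ and $g_1, \ldots, g_p$ of $\mfb$ on a common neighborhood $U$ of $0$. Second, since $\overline{\mfa} = \overline{\mfb}$ by hypothesis, each $g_k$ belongs to $\overline{\mfa}$, so by the analytic characterization there is a constant $C_1 > 0$ (after shrinking $U$) with $\abs{g_k} \le C_1 \sum_{j} \abs{f_j}$ for every $k$; summing over $k$ gives $\sum_k \abs{g_k} \le p\, C_1 \sum_j \abs{f_j}$ on $U$. Taking logarithms yields $\log\abs{\mfb} \le \log\abs{\mfa} + O(1)$ near $0$. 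Third, by the symmetric argument — each $f_j \in \overline{\mfb}$ — we get a constant $C_2 > 0$ with $\sum_j \abs{f_j} \le m\, C_2 \sum_k \abs{g_k}$, hence $\log\abs{\mfa} \le \log\abs{\mfb} + O(1)$. Combining the two inequalities gives $\log\abs{\mfa} = \log\abs{\mfb} + O(1)$, which is exactly the asserted equivalence of singularities. Finally, I would note that the ``$O(1)$'' here is indeed locally bounded near every point of a neighborhood of $0$ in the sense defined in the paper, since the estimates hold on an honest neighborhood $U$; and the statement is independent of the chosen generators by the well-definedness recalled just before the proposition.

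I do not expect any serious obstacle here: the only point requiring a little care is making sure one invokes the correct form of the analytic description of integral closure (the equivalence between membership in $\overline{\mfa}$ and the pointwise bound $\abs{g} \lesssim \sum \abs{f_j}$), and that shrinking the neighborhood finitely many times is harmless. If one prefers a purely algebraic route, the same conclusion follows from the valuative criterion for integral closure combined with the fact that Lelong numbers and, more generally, the singularity class of $\log\abs{\mfa}$ depend only on $\overline{\mfa}$; but the analytic boundedness argument above is the most direct and is the one I would write out.
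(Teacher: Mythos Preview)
Your proposal is correct and essentially the same as the paper's proof: both hinge on the analytic characterization of integral closure (membership in $\overline{\mfa}$ is equivalent to the bound $\abs{g}\le C\sum_j\abs{f_j}$), which the paper cites from \cite{LT08} and you cite from \cite{D11}. The only cosmetic difference is that the paper first reduces to the case $\mfb=\overline{\mfa}$, so one inequality is immediate from the inclusion $\mfa\subset\overline{\mfa}$, whereas you run the boundedness argument symmetrically in both directions; the content is identical.
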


\begin{proof} 

 This is well-known, we just recall it explicitly from the literature. It suffices to consider the case when $\mfb = \overline{\mfa}$. Since $\mfa \subset \overline{\mfa}$,  we first have $\log \abs{\mfa} \le \log \abs{\overline{\mfa}} + O(1)$. 
 
 On the other hand, suppose that $\mfa$ is generated by $h_1, \ldots, h_m$. From \cite[p.799, Thm. 2.1]{LT08} (cf. \cite[(9.6.10)]{L}), we have $\log \abs{h_j} \le \log \abs{\mfa}$. Thus we have  $\log \abs{\overline{\mfa}} = \log \sum \abs{h_j} + O(1) = \log \max \abs{h_j} + O(1) \le \log \abs{\mfa} + O(1) $.  
\end{proof}

 For psh functions, Lelong numbers are  important and fundamental measure of their singularities  (cf. \cite{D11}). When a psh function $u$ has isolated singularities (i.e. locally bounded outside the point $0 \in \CC^n$), not only the first (usual) Lelong number $e_1 (u)$, but higher Lelong numbers $e_2 (u), \ldots, e_n(u)$ are defined thanks to work of Demailly (cf. \cite{D93}), which can be expressed as follows: 

\begin{equation}\label{kth}
  e_k (u) := e_k (u, 0) = \int_{\{0\}} (dd^c u)^k \wedge (dd^c \log \abs{z} )^{n-k} 
\end{equation}

\noi for $k= 1, \ldots, n$. When $u$ arises from an ideal $\mfa$ (which then should be zero-dimensional)  in the sense of \eqref{ideal}, the higher Lelong numbers correspond exactly to mixed multiplicities defined from commutative algebra (cf. \cite{T73}, \cite{L}, \cite{P15}, \cite{B17}). In fact, this holds in the generality of mixed Monge-Ampère masses which include higher Lelong numbers as special cases. For psh functions $u_1, \ldots, u_n$ having isolated singularities at $0 \in \CC^n$, we define and denote their (residual) mixed Monge-Ampère mass at $0 \in \CC^n$ by 

$$ e(u_1, \ldots, u_n) = \int_{\{0\}}  (dd^c u_1) \wedge \ldots \wedge (dd^c u_n) .$$

\noi We have the following relation between mixed Monge-Ampère mass and mixed multiplicities. 

\begin{proposition}

 Let $\mfa_1, \ldots, \mfa_n \subset \OO_{0, \CC^n}$ be zero-dimensional ideals at $0 \in \CC^n$. For the psh functions $u_k = \log \abs{\mfa_k}, k = 1, \ldots, n$, we have  the equality  $$ e(u_1, \ldots, u_n) = \mu (\mfa_1, \ldots, \mfa_n) $$ where the RHS is the mixed multiplicity of $\mfa_1, \ldots, \mfa_n$. 

\end{proposition}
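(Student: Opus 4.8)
The plan is to deduce the mixed statement from its diagonal case $e(u,\ldots,u)=\mu(\mfa,\ldots,\mfa)=e(\mfa)$ (the Hilbert-Samuel multiplicity) by polarization, and then to prove the diagonal case by reducing to a complete intersection. For the polarization: the residual mixed Monge-Amp\`ere mass $e(v_1,\ldots,v_n)$ depends only on the singularity equivalence classes of the $v_k$, by Demailly's comparison theorems (cf.\ \cite{D93}, \cite{D11}). Fix nonnegative integers $c_1,\ldots,c_n$ and write $\psi_c:=\log\abs{\mfa_1^{c_1}\cdots\mfa_n^{c_n}}$, so that $\psi_c=\sum_k c_k u_k+O(1)$. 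Using this, the linearity of $dd^c$, and the symmetric multilinearity of the (Bedford-Taylor-Demailly) Monge-Amp\`ere products, one obtains
$$ e(\psi_c,\ldots,\psi_c)\;=\sum_{a_1+\cdots+a_n=n}\binom{n}{a_1,\ldots,a_n}\,c_1^{a_1}\cdots c_n^{a_n}\;e\bigl(u_1^{[a_1]},\ldots,u_n^{[a_n]}\bigr). $$
On the algebraic side, for all sufficiently large $c_1,\ldots,c_n$ --- hence, the expression being polynomial in $c$, for all nonnegative integers --- one has $e(\mfa_1^{c_1}\cdots\mfa_n^{c_n})=\sum_{|a|=n}\binom{n}{a_1,\ldots,a_n}c_1^{a_1}\cdots c_n^{a_n}\,\mu(\mfa_1^{[a_1]},\ldots,\mfa_n^{[a_n]})$, the defining property of the mixed multiplicities (Teissier, Rees, Risler; cf.\ \cite{T73}, \cite{L}). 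Granting the diagonal case applied to each ideal $\mfa_1^{c_1}\cdots\mfa_n^{c_n}$ (whose psh function is $\psi_c$) gives $e(\psi_c,\ldots,\psi_c)=e(\mfa_1^{c_1}\cdots\mfa_n^{c_n})$, so the two polynomials in $c$ agree on $\ZZ_{\ge 0}^{\,n}$ and hence coefficientwise; the coefficient of $c_1\cdots c_n$ yields $e(u_1,\ldots,u_n)=\mu(\mfa_1,\ldots,\mfa_n)$.

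For the diagonal case, let $\mfa$ be zero-dimensional at $0$ and $u=\log\abs{\mfa}$. Since the residue field $\CC$ is infinite, $n$ general $\CC$-linear combinations $g_1,\ldots,g_n$ of a generating set of $\mfa$ form a (minimal) reduction of $\mfa$, i.e.\ $\overline{(g_1,\ldots,g_n)}=\overline{\mfa}$; in particular $g=(g_1,\ldots,g_n)\colon(\CC^n,0)\to(\CC^n,0)$ is a finite holomorphic germ. By Proposition~\ref{closure}, $u=\log\abs{g}+O(1)$, so $e(u,\ldots,u)=\int_{\{0\}}(dd^c\log\abs{g})^n$. On the algebraic side, $e(\mfa)=e((g_1,\ldots,g_n))$ because the Hilbert-Samuel multiplicity depends only on the integral closure (\cite{L}), and $e((g_1,\ldots,g_n))=\dim_\CC\OO_{0,\CC^n}/(g_1,\ldots,g_n)$ because $(g_1,\ldots,g_n)$ is a parameter ideal in the regular, hence Cohen-Macaulay, local ring $\OO_{0,\CC^n}$. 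Finally, $\dim_\CC\OO_{0,\CC^n}/(g_1,\ldots,g_n)$ is the local degree $\deg_0 g$, and since $\log\abs{g}=(\log\abs{z})\circ g+O(1)$ while $(dd^c\log\abs{z})^n=\delta_0$ on $\CC^n$, the pullback (change of variables) formula for the Monge-Amp\`ere operator under the finite map $g$ gives $\int_{\{0\}}(dd^c\log\abs{g})^n=\deg_0 g$. Combining these identities yields $e(u,\ldots,u)=e(\mfa)$, completing the diagonal case and hence the proof.

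The formal ingredients above --- polarization, passage to a minimal reduction, invariance of multiplicity under integral closure, and the equality $e(\mathfrak{q})=\dim_\CC\OO_{0,\CC^n}/\mathfrak{q}$ for a parameter ideal --- are standard. The step I expect to need the most care is the last analytic identity $\int_{\{0\}}(dd^c\log\abs{g})^n=\deg_0 g$ for a finite holomorphic map germ $g$: it rests on $(dd^c\log\abs{z})^n$ being the Dirac mass at $0$ and on the compatibility of the Bedford-Taylor-Demailly Monge-Amp\`ere operator with pullback along $g$ when the potential is locally bounded off $0$. This is classical in Demailly's theory of Monge-Amp\`ere operators and Lelong numbers (cf.\ \cite{D93}, \cite{D11}); alternatively, the whole proposition can simply be quoted from the established dictionary between mixed Monge-Amp\`ere masses and mixed multiplicities (\cite{T73}, \cite{L}, \cite{P15}, \cite{B17}).
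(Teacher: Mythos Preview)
Your argument is correct and follows the same overall route as the paper: reduce the mixed statement to the diagonal case by polarization, then invoke the diagonal equality. The only difference is that the paper simply cites \cite[Lem.~2.1]{D09} for the diagonal case and \cite[Cor.~4.2]{R11} for the polarization, whereas you unpack both steps explicitly; your treatment of the diagonal case via a minimal reduction $(g_1,\ldots,g_n)$ and the identity $\int_{\{0\}}(dd^c\log|g|)^n=\deg_0 g$ is in fact close in spirit to Demailly's own argument in \cite{D09}.
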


\begin{proof} 

The case when $\mfa_1 = \ldots = \mfa_n$ is due to  the fundamental result \cite[Lem. 2.1]{D09}. The general case follows from  the polarization (cf. \cite{R11}, \cite{KR18}) as was noted in \cite[Cor. 4.2]{R11}. 

\end{proof} 

\subsection{Łojasiewicz exponents and polar invariants}

Now we define the Łojasiewicz exponent of a psh function. 

\begin{definition}\label{loja}
 
  Let $\vp$ be a psh function germ at $0 \in \CC^n$ with isolated singularities, i.e. $\vp$ is locally bounded outside $0$. Suppose that there exists $C \ge 0$ such that 
 
 \begin{equation}\label{bb} 
  \vp \ge C \log \abs{z} + O(1)
  \end{equation} 
where  $\abs{z}^2 = \abs{z_1}^2 + \ldots + \abs{z_n}^2$. 
   We define the Łojasiewicz exponent of $\vp$ to be the infimum of such $C$'s and denote it by $L(\vp)$.

\end{definition}

 This agrees with the more usual Łojasiewicz exponents when $\vp$ arises from an ideal (cf. \cite[(1.7)]{T77}, \cite{BF16}). An example of a psh function $\vp$ with isolated singularities but not satisfying the condition in this definition can be found e.g. in \cite[Ex. 2.7]{KR20} (cf. \cite[p.354]{R10}). Note that if $\vp$ satisfies \eqref{bb}, then so does its restriction to $\Ld_j$ and thus we can define $L(\vp|_{\Ld_j})$.

 \begin{proposition}\label{eel}
 Let $\vp$ be as in Definition~\ref{loja}.

\begin{enumerate}
\item
 We have $\displaystyle \frac{e_{n-1} (\vp)}{e_n (\vp)} \ge \frac{1}{L (\vp)} $ where $L(\vp)$ is the Łojasiewicz exponent of $\vp$.

\item 
 For each $j= 1, \ldots, n-1$, let $\Ld_{j} = H_1 \cap \ldots \cap H_{j}$ where $H_1, \ldots, H_j$ are general hyperplanes containing $0 \in \CC^n$.  Then we have 
 $$ \frac{e_{n-j-1} (\vp) }{e_{n-j} (\vp)} \ge \frac{1}{ L( \vp|_{\Ld_j} ) } . $$
 
 \end{enumerate}
 
\end{proposition}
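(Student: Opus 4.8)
The plan is to derive both inequalities from one mechanism — comparison of mixed Monge-Ampère masses when one argument is made more singular — and to obtain part (2) by transporting part (1) to a general linear slice.

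For part (1), I would first rewrite the relevant higher Lelong numbers as residual mixed Monge-Ampère masses: by \eqref{kth}, $e_n(\vp) = e(\vp,\ldots,\vp)$ and $e_{n-1}(\vp) = e(\vp,\ldots,\vp,\log\abs{z})$, each with $n$ arguments. By Definition~\ref{loja}, for every $\ep>0$ the psh function $(L(\vp)+\ep)\log\abs{z}$ is more singular than $\vp$ at $0$, i.e. $\vp \ge (L(\vp)+\ep)\log\abs{z} + O(1)$ near $0$. The key step is then the monotonicity of the residual mixed Monge-Ampère mass under replacing one of its arguments by a more singular psh function, together with homogeneity in that argument:
\[
 e_n(\vp) = e(\vp,\ldots,\vp) \le e\big((L(\vp)+\ep)\log\abs{z},\vp,\ldots,\vp\big) = (L(\vp)+\ep)\,e(\log\abs{z},\vp,\ldots,\vp) = (L(\vp)+\ep)\,e_{n-1}(\vp).
\]
Letting $\ep\to 0$ gives $e_n(\vp)\le L(\vp)\,e_{n-1}(\vp)$, equivalently $e_{n-1}(\vp)/e_n(\vp)\ge 1/L(\vp)$ (the degenerate cases where a mass vanishes being immediate). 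The same conclusion is also a direct instance of Demailly's comparison theorem for generalized Lelong numbers, applied to the current $T=(dd^c\vp)^{n-1}$ (of bidimension $(1,1)$) with the two weights $\vp$ and $\log\abs{z}$: from \eqref{bb} one gets $\limsup_{z\to 0}\vp(z)/\log\abs{z}\le L(\vp)$, and hence $e_n(\vp)=\nu(T,\vp)\le L(\vp)\,\nu(T,\log\abs{z})=L(\vp)\,e_{n-1}(\vp)$.

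For part (2) I would apply part (1) on the slice $\Ld_j\cong\CC^{n-j}$: the restriction $\vp|_{\Ld_j}$ still has an isolated singularity and satisfies \eqref{bb}, so part (1) there yields $e_{n-j-1}(\vp|_{\Ld_j})/e_{n-j}(\vp|_{\Ld_j})\ge 1/L(\vp|_{\Ld_j})$. It then remains to check that a general linear section preserves the top two higher Lelong numbers, namely $e_k(\vp|_{\Ld_j},0)=e_k(\vp,0)$ for $k\le n-j$ and general $\Ld_j$. Slicing by one general hyperplane at a time, induction on $j$ reduces this to the case of a single general hyperplane $H$, where one needs $e_k(\vp|_H,0)=e_k(\vp,0)$ for $k\le n-1$. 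To prove the latter I would expand the factor $(dd^c\log\abs{z})^{n-k}$ in \eqref{kth} by the Crofton formula on $\PP^{n-1}$, writing it as the average of the currents of integration $[\tilde L]$ over linear subspaces $\tilde L\ni 0$ of dimension $n-k$; together with the restriction theorem for Monge-Ampère currents onto a general linear subspace this gives
\[
 e_k(\vp,0)=\int e_k(\vp|_{\tilde L},0)\,d\mu(\tilde L),
\]
the normalising constant being $1$, as one checks by testing on $\vp=\log\abs{z}$. Since $\tilde L\mapsto e_k(\vp|_{\tilde L},0)$ is upper semicontinuous, with its generic (minimal) value attained off a negligible subset of the Grassmannian, this average equals that generic value, which a general $\Ld_j$ then realises for all $k\le n-j$ at once. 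In the algebraic case $\vp=\log\abs{\mfa}$ this identity is precisely the classical principle, going back to Teissier, that the mixed multiplicities of an $\mfm$-primary ideal are computed on general linear sections (cf. \cite{T73}, \cite{L}), via the correspondence between higher Lelong numbers and mixed multiplicities recalled above.

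The main obstacle is this last point, the behaviour of higher Lelong numbers under general linear sections — concretely, the assertion that the locus of special $\tilde L$ on which $e_k(\vp|_{\tilde L},0)$ strictly exceeds its generic value is negligible, which is an analyticity/semicontinuity statement for Lelong numbers in families; in the absence of a clean self-contained argument one would extract it from the literature on higher Lelong numbers and mixed multiplicities. By contrast, the input to part (1) — monotonicity of mixed Monge-Ampère masses, equivalently a special case of Demailly's comparison theorem, together with the elementary bound $\limsup_{z\to 0}\vp(z)/\log\abs{z}\le L(\vp)$ coming from \eqref{bb} — is standard.
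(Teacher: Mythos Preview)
Your proof of part (1) is exactly the paper's: rewrite $e_{n-1}(\vp)$ and $e_n(\vp)$ as mixed Monge--Amp\`ere masses and apply Demailly's comparison theorem to $\vp \ge C\log\abs{z}+O(1)$, then pass to the infimum $C=L(\vp)$.

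For part (2) the overall structure is also the same --- apply (1) on $\Ld_j$ and identify $e_{n-j}(\vp|_{\Ld_j})$, $e_{n-j-1}(\vp|_{\Ld_j})$ with the corresponding higher Lelong numbers of $\vp$ on $\CC^n$. The difference is in how this identification is justified. The paper applies Siu's slicing theorem \cite[\S 11]{S74} directly to the closed positive $(n-j,n-j)$-currents $T=(dd^c\vp)^{n-j-1}\wedge dd^c\log\abs{z}$ and $S=(dd^c\vp)^{n-j}$: their Lelong numbers at $0$ coincide with the Lelong numbers of their slices by almost every $\Ld_j$, which are precisely the mixed masses on $\Ld_j$ you need. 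Your Crofton-plus-semicontinuity route is a valid alternative, but note that upper semicontinuity of $\tilde L\mapsto e_k(\vp|_{\tilde L},0)$ alone does not force the super-level sets to be \emph{negligible}; you need the analyticity of the jump locus, and that is again Siu's theorem. So the two arguments ultimately rest on the same input, and the paper's direct citation of \cite{S74} is the cleaner packaging. One small slip: in your Crofton expansion of $(dd^c\log\abs{z})^{n-k}$, the linear subspaces $\tilde L$ have codimension $n-k$, hence dimension $k$, not $n-k$.
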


\noi In (2), `general' is in the sense of `almost all' as in \cite{S74} which is used in the following proof. 

\begin{proof} 

 Let $u := \log \abs{\mfm} = \log \abs{z}$. 
 In terms of mixed Monge-Ampère masses, we have $e_{n-1} (\vp) = e (\vp, \ldots, \vp, u)$ and $e_n (\vp) = e(\vp, \ldots, \vp, \vp)$. Whenever $\vp \ge C u + O(1)$, we have $  e (\vp, \ldots, \vp, u) \ge \frac{1}{C} e(\vp, \ldots, \vp, \vp)$ by Demailly's comparison theorem~\cite{D93}. Since $L(\vp)$ is the infimum of such $C$'s, (1) follows.   
 
 Now when we restrict (1) to $\Ld_j$, we obtain (2) from \cite{S74}. More precisely, let $T$ and $S$ be the closed positive currents of bidegree $(n-j, n-j)$ given by  
 
 $$ T = (dd^c \vp) \wedge \ldots \wedge (dd^c \vp) \wedge (dd^c u), \; \; S = (dd^c \vp) \wedge \ldots \wedge (dd^c \vp) \wedge (dd^c \vp)   .$$

 From the fundamental slicing theory of closed positive currents of \cite[\S 11, p.136]{S74}, the Lelong number of $T$ (resp. of $S$) at $0$ is equal to the Lelong number of its slice by almost every $\Ld_j$ (i.e. restriction to $\Ld_j$), a plane of codimension $j$. Here the Lelong number of $T$ (resp. $S$) is nothing but $e(\vp, \ldots, \vp, u, u, \ldots, u)$
 (resp. $e(\vp, \ldots, \vp, \vp, u, \ldots, u)$) where $u$ is repeated $j+1$ times (resp. $j$ times).  Hence they are equal to 
 
 $$ e(\vp|_{\Ld_j}, \ldots, \vp|_{\Ld_j}, u|_{\Ld_j} )   \quad \text{          resp. }    e(\vp|_{\Ld_j}, \ldots, \vp|_{\Ld_j}, \vp|_{\Ld_j} )  $$
 
\noi which are mixed Monge-Ampère masses taken on $\Ld_j$ (thus taking $n-j$ arguments, of course). 
Therefore (1) applied on $\Ld_j$ yields (2). 
 
\end{proof} 

\begin{corollary}\label{polar}

 Let $f$ be as in the beginning of the introduction. For Teissier's polar invariants, we have the following relation when we take $\vp = \log \abs{\mfm \cdot J_f}$, for every $j = 0, 1, \ldots, n-1$,

 $$ \frac{e_{n-j-1} (\vp) }{e_{n-j} (\vp)} \ge \frac{1}{1+ \theta (f_{j}) }  $$
 
\noi where $ f_0 := f$ and for $j \ge 1$,  $f_j := f|_{\Ld_j}$ and $\Ld_{j} = H_1 \cap \ldots \cap H_{j}$ for general hyperplanes $H_1, \ldots, H_j$ containing $0 \in \CC^n$.  
 
\end{corollary}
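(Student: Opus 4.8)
The plan is to deduce Corollary~\ref{polar} from Proposition~\ref{eel} by bounding each Łojasiewicz exponent $L(\vp|_{\Ld_j})$ above by $1+\theta(f_j)$. The one genuinely external ingredient is that Teissier's polar invariant of a holomorphic function germ $g$ with an isolated singularity is exactly the Łojasiewicz exponent of its Jacobian ideal $J_g$ relative to the maximal ideal, i.e. $\theta(g) = L(\log|J_g|)$ in the notation of Definition~\ref{loja} (cf. \cite[(1.7)]{T77}, \cite{BF16} and the remark after Definition~\ref{loja}); everything else is a formal manipulation of psh singularities. The key elementary computation is the following: for such a $g$ on $(\CC^m,0)$ with maximal ideal $\mfm$, the product ideal $\mfm\cdot J_g$ is generated by the functions $z_i\,\partial_k g$, and since $\sum_{i,k}|z_i|\,|\partial_k g| = \bigl(\sum_i|z_i|\bigr)\bigl(\sum_k|\partial_k g|\bigr)$ one has $\log|\mfm\cdot J_g| = \log|\mfm| + \log|J_g| + O(1) = \log|z| + \log|J_g| + O(1)$ as psh singularities. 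Combined with the trivial identity $L(\log|z| + \psi) = 1 + L(\psi)$ — valid for any psh $\psi$ with isolated singularity satisfying \eqref{bb}, and $\psi = \log|J_g|$ qualifies since $J_g$ is $\mfm$-primary — this yields $L(\log|\mfm\cdot J_g|) = 1 + \theta(g)$. For $g = f$ it gives $L(\vp) = 1+\theta(f)$; since $\vp = \log|\mfm\cdot J_f|$ has an isolated singularity (as $V(\mfm\cdot J_f) = \{0\}$) and satisfies \eqref{bb}, Proposition~\ref{eel}(1) settles the case $j=0$.

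For $j\ge1$, I would fix a plane $\Ld_j = H_1\cap\cdots\cap H_j$ general enough to be admissible both for Siu's slicing theorem used in Proposition~\ref{eel}(2) and for the definition of $\theta(f_j)$ (so that, in particular, $f_j := f|_{\Ld_j}$ has an isolated singularity on $\Ld_j$); this is possible because each requirement holds on a dense set of planes, and the left-hand side $e_{n-j-1}(\vp)/e_{n-j}(\vp)$ of the asserted inequality does not depend on $\Ld_j$. Restricting a set of generators of $\mfm\cdot J_f$ yields one for $(\mfm\cdot J_f)\OO_{\Ld_j} = \mfm_{\Ld_j}\cdot(J_f\OO_{\Ld_j})$, where $\mfm_{\Ld_j}$ is the maximal ideal of $0$ in $\Ld_j$, so $\vp|_{\Ld_j} = \log|\mfm_{\Ld_j}| + \log|J_f\OO_{\Ld_j}| + O(1)$ (this again has an isolated singularity, since $V\bigl((\mfm\cdot J_f)\OO_{\Ld_j}\bigr) = \{0\}$). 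On the other hand, by the chain rule the partial derivatives of $f_j$ in any linear coordinates on $\Ld_j$ are $\OO_{\Ld_j}$-linear combinations of the restrictions $(\partial_{z_k}f)|_{\Ld_j}$, hence $J_{f_j} \subseteq J_f\OO_{\Ld_j}$ and therefore $\log|J_{f_j}| \le \log|J_f\OO_{\Ld_j}| + O(1)$. Consequently $\vp|_{\Ld_j} \ge \log|\mfm_{\Ld_j}| + \log|J_{f_j}| + O(1) = \log|\mfm_{\Ld_j}\cdot J_{f_j}| + O(1)$.

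Since a less singular psh function has no larger Łojasiewicz exponent, the last inequality gives $L(\vp|_{\Ld_j}) \le L(\log|\mfm_{\Ld_j}\cdot J_{f_j}|)$, and the right-hand side equals $1+\theta(f_j)$ by the first paragraph's computation applied to $g = f_j$ on $\Ld_j\cong\CC^{n-j}$. Substituting into Proposition~\ref{eel}(2) gives
$$\frac{e_{n-j-1}(\vp)}{e_{n-j}(\vp)} \ge \frac{1}{L(\vp|_{\Ld_j})} \ge \frac{1}{1+\theta(f_j)},$$
which, together with the case $j=0$ above, is the assertion. The only non-formal input is the identification $\theta(g) = L(\log|J_g|)$; the only point that needs a little care is reconciling the two notions of ``general'' plane, and this is harmless since both hold on dense sets. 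I would also remark that in fact $\overline{J_f\OO_{\Ld_j}} = \overline{J_{f_j}}$ for general $\Ld_j$, so the inequality $L(\vp|_{\Ld_j}) \le 1+\theta(f_j)$ is actually an equality; but only the inclusion $J_{f_j}\subseteq J_f\OO_{\Ld_j}$, which holds for every linear $\Ld_j$, is used here.
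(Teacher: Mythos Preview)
Your proof is correct and follows the same overall architecture as the paper's: reduce to Proposition~\ref{eel} by computing $L(\vp|_{\Ld_j})$ in terms of $\theta(f_j)$, using the identification $\theta(g)=L(\log|J_g|)$ and the additivity $L(\log|\mfm|+\psi)=1+L(\psi)$. The one substantive difference is in how you pass from $J_f\OO_{\Ld_j}$ to $J_{f_j}$. The paper invokes Teissier's result that these two ideals have the same integral closure for general $\Ld_j$, obtaining the \emph{equality} $L(\vp|_{\Ld_j})=1+\theta(f_j)$ via Proposition~\ref{closure}. You instead use only the trivial inclusion $J_{f_j}\subseteq J_f\OO_{\Ld_j}$ coming from the chain rule, which gives merely $L(\vp|_{\Ld_j})\le 1+\theta(f_j)$; since only this inequality is needed for the corollary, you avoid citing the integral-closure statement altogether. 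That is a genuine economy. One small wording issue: saying each genericity condition ``holds on a dense set'' is not by itself enough to intersect them; what makes it work here is that Teissier's condition is Zariski-open dense and Siu's slicing condition excludes only a null set, so their intersection has full measure. With that clarification your argument is complete.
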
 

\begin{proof} 

 Let us first recall from  \cite[p.270, (1.7) Corollaire 2]{T77}  that, in general, the polar invariant $ \theta (f)$ is equal to the Łojasiewicz exponent of the Jacobian ideal $J_f$, which is equal to $L(\log \abs{J_f})$ in our notation. \footnote{The same argument can be also checked using \cite[Thm. 10.10]{B20} and the definition of $\theta(f)$.}  We will apply this to $f_j$, for which we also need the fundamental fact that the Jacobian ideal of the restriction  $f_j$ and the restriction of the Jacobian ideal $J_f$  coincide up to integral closure, taken from \cite{T73} (cf. \cite[p.21]{P15}). In our notation of psh functions, on $\Ld_j$, we have (using Proposition~\ref{closure})
 
 \begin{equation}\label{jfj}
 \log \abs{J_{f_j}} = \psi|_{\Ld_j} + O(1) 
 \end{equation} where $\psi = \log \abs{J_f}$. 
  Now since $\vp =  \log \abs{\mfm J_f} = \psi + \log \abs{\mfm} + O(1)$, we have $L (\vp |_{\Ld_j} ) = L (\psi |_{\Ld_j})  + L (\log \abs{\mfm} |_{\Ld_j}) = L (\psi |_{\Ld_j})+ 1$ which is equal to $L(\log \abs{J_{f_j}}) + 1$ by \eqref{jfj}.  We then have $L (\vp |_{\Ld_j} ) = 1+ \theta (f_j)$ by the first sentence above : hence  the assertion follows from Proposition~\ref{eel}, (2). 
\end{proof}

  Now we complete the proof of Theorem~\ref{main}. 

\begin{proof}[Proof of Theorem~\ref{main}]

 Let $J_f  = ( \frac{\de f}{\de z_1}, \ldots,   \frac{\de f}{\de z_n} )$ be the Jacobian ideal of $f$.  Briançon and Skoda~\cite{BS74} showed (cf. \cite{D11}) that $f$ belongs to the integral closure of the ideal $$(z_1  \frac{\de f}{\de z_1}, \ldots, z_n  \frac{\de f}{\de z_n} ) \subset (z_1, \ldots, z_n) \cdot J_f =: \mfa .$$

Letting $\vp = \log \abs{\mfa}$, we apply the main result of \cite{DH}, Theorem 1.2, to $\vp$ which has isolated singularities. Together with Corollary~\ref{polar}, it gives

\begin{equation}\label{lct_polar}
 \lct (\vp) \ge \frac{1}{e_1 (\vp)} + \frac{e_1(\vp)}{e_2(\vp)} + \ldots + \frac{e_{n-1} (\vp)}{e_n (\vp)} \ge \sum^{n-1}_{j=0} \frac{1}{1+ \theta (f_{j}) } .
\end{equation}

\noi Here we note that certainly $e_1 (\vp) >0 $ holds, which then implies  $e_j (\vp) >0$ by \cite[Cor. 2.2]{DH} for $j = 2, \ldots, n$.  
\end{proof}

\begin{example}

 Let $f(z_1, \ldots, z_n) = z_1^d + \ldots + z_n^d$. Then equality holds in all of \eqref{main1}, \eqref{lctm}, \eqref{lct_polar}. Note that $(z_1^d, \ldots, z_n^d) \subset  \mfm \cdot J_f  \subset \mfm^d$ and thus the integral closure of $ \mfm \cdot J_f $ equals $\mfm^d$. 

\end{example}

 We remark that further investigations on the equality cases of the above inequality of \cite{DH} (cf. \cite{R15}, \cite{B17}) and of the inequalities in Proposition~\ref{eel} would be of great interest.

\footnotesize

\bibliographystyle{amsplain}

\qa

\qa

\normalsize

\noi \textsc{Dano Kim}

\noi Department of Mathematical Sciences \& Research Institute of Mathematics

\noi Seoul National University, 08826  Seoul, Korea

\noi Email address: kimdano@snu.ac.kr

\end{document}